\newtheorem{theorem}{Theorem}[section]
\theoremstyle{definition}
\theoremstyle{remark}
\numberwithin{equation}{section}
\begin{document}

\setcounter{page}{1}

\color{darkgray}{
\noindent

\centerline{}

\centerline{}
}

\title[Hölder Continuity of An Alternating Erd\H{o}s Series on Prime K-TUPLES]{Hölder Continuity of An Alternating Erd\H{o}s Series on Prime K-TUPLES}

\author[Nikos Mantzakouras, Carlos López Zapata]{Nikos Mantzakouras $^1$$^{*}$ Carlos López Zapata $^2$}

\address{$^{1}$ M.Sc in Applied Mathematics and Physics National and Kapodistrian University of Athens}
\email{\textcolor[rgb]{0.00,0.00,0.84}{nikmatza@gmail.com}}

\address{$^{2}$ B.Sc. electronics engineering, graduate of Pontifical Bolivarian University (UPB), Medellín, Colombia. Currently residing in Szczecin, Poland}
\email{\textcolor[rgb]{0.00,0.00,0.84}{mathematics.edu.research@zohomail.eu}}

\keywords{Hardy–Littlewood prime k-tuples conjecture, Hölder Continuity, Riemann-Stieltjes integral, Riemann hypothesis, Chebyshev function, Abel summation formula, Lipschitz function, prime counting function, fractional Sobolev embeddings, Laplace transform}

\date{Received: 2025; Revised: 2025; Accepted: 2025.
\newline \indent $^{*}$ Corresponding author:\texttt{ mathematics.edu.research@zohomail.eu}}

\begin{abstract}
We investigate the conditional convergence of the alternating Erdős series
\[
\sum_{n=1}^\infty \frac{(-1)^n n}{p_n},
\]
where \( p_n \) denotes the \( n \)-th prime. This open problem, first posed by Erdős, was further explored by Terence Tao. Tao's work shows that the series may converge conditionally, but only under a sufficiently strong form of the Hardy–Littlewood prime \( k \)-tuples conjecture. Building on this, we offer a new method that leads to representing the series as a Riemann–Stieltjes integral of the form
\[
I = \int_1^\infty g(x) \, d\psi(x), 
\]
where \( g(x) = x(-1)^x \), and \( \psi(x) \) is the Chebyshev function or a closely related prime counting function. We rigorously analyze this integral by decomposing it into main and error terms, applying integration by parts in the Stieltjes sense, and bounding the error terms. Assuming the Riemann Hypothesis, we explore the Hölder continuity of \( \psi(x) \) in the asymptotic form \( \psi(x) = x + \mathcal{O}(x^{1/2}) \), and introduce a test function \( g(x) = e^{i\pi x} e^{-\lambda x} \), which is smooth and Lipschitz. Applying Young’s criterion, we show that the integral \( I \) converges when \( g \in C^\alpha \), \( \psi \in C^\beta \), and \( \alpha + \beta > 1 \). Furthermore, we prove that \( I \) converges absolutely for \( \lambda > \frac{3}{2} \), based on sharp bounds for the error terms. Our results are supported by fractional Sobolev embeddings and justify the use of Young’s inequality under generalized Hölder conditions.
\end{abstract}

\maketitle

\begin{center}
\smallskip
\textbf{Mathematics Subject Classification (2020):} Primary 11N05; Secondary 40A05.
\end{center}

\section{Introduction and preliminaries} \label{Introd}

\noindent We let \( p_n \) denote the \( n \)th prime. We follow an open question of Erdős \cite{Guy} (page 203, E7) (see also \cite{Finch}, \cite{Erdos}, \cite{ErdosNathanson}):  

\emph{Does the alternating series}  
\[
\sum_{n=1}^\infty \frac{(-1)^n n}{p_n}
\]
\emph{converge}?  
In his article \cite{Tao2023}, Terence Tao discusses that there is numerical evidence that suggests that the series converges, apparently rather slowly, to approximately \(-0.052161\). 

Tao observes that the convergence behavior of the alternating series
\[
\sum_{n=1}^\infty \frac{(-1)^n n}{p_n}
\]
is intimately connected to the distribution of prime gaps. Specifically, the difference between successive terms,
\[
\frac{n+1}{p_{n+1}} - \frac{n}{p_n},
\]
is largely governed by the gap \( p_{n+1} - p_n \) between consecutive primes. Consequently, the question of convergence hinges on whether there exists a statistical correlation between the parity of \( n \) and the size of the corresponding prime gap. If such a correlation were to exist, it would introduce a systematic bias in the fluctuations of the prime counting function
\[
\pi(x) := \sum_{p \leq x} 1,
\]
specifically in how primes are distributed relative to even and odd indices. Notably, it is an unpublished observation of Said that Erdős’s question is equivalent to assessing whether such a parity bias exists in the behavior of \( \pi(x) \).

Terence Tao demonstrates that the alternating Erdős series converges conditionally, assuming a suitably strong version of the Hardy–Littlewood prime \(k\)-tuples conjecture. In his work, Tao utilizes a slightly strengthened formulation of the conjecture, specifically the Quantitative Hardy–Littlewood prime tuples conjecture (Conjecture 1.3) \cite{Tao2023}, which is clearly articulated in this version:

\textbf{Conjecture 1.3 (Quantitative Hardy–Littlewood prime tuples conjecture):}  
There exist constants \(\epsilon > 0\) and \(C > 0\) such that for all \(x \geq 10\), \(k \leq (\log \log x)^5\), and all tuples \(H = \{h_1, \dots, h_k\} \subset [0, \log^2 x]\) of distinct integers \(h_1, \dots, h_k\), the following inequality holds:
\[
\left| \sum_{n \leq x} \prod_{i=1}^k \mathbb{1}_{\mathbb{P}}(n + h_i) - \mathcal{S}(H) \int_2^x \frac{dy}{\log^k y} \right| \leq Cx^{1-\epsilon},
\]
where \(\mathbb{1}_{\mathbb{P}}\) is the indicator function of the primes, \(\mathcal{S}(H)\) is the singular series given by
\[
\mathcal{S}(H) := \prod_{p} \left(1 - \frac{\nu_H(p)}{p}\right)^k,
\]
and \(\nu_H(p)\) is the number of distinct residue classes modulo \(p\) occupied by the tuple \(H\). This conjecture holds for the admissible case \(\mathcal{S}(H) > 0\), though it is also valid in the non-admissible case where \(\mathcal{S}(H) = 0\). While earlier work \cite{Kuperberg} restricted \(k \leq (\log \log x)^3\), Tao's formulation requires an extension to \(k \leq (\log \log x)^5\) for technical reasons. Notably, Conjecture 1.3 has been verified in a probabilistic sense for a random model of primes, and this model plays a key role in Tao's arguments.

We approach the problem using the unexplored Riemann-Stieltjes integral \cite{Bradley1994}, motivated by and building upon Tao's framework. Specifically, we express the alternating series as a Riemann–Stieltjes integral of the form:
\[
I = \int_1^\infty g(x) \, d\psi(x),
\]
where \( g(x) = x(-1)^x \), and \( \psi(x) \) is either the Chebyshev function or a related prime-counting function. This representation allows us to explore the behavior of the series through integral analysis and leads to further insights into its convergence properties.
 
We begin by considering the alternating Erdős series of the form:
\[
S = \sum_{n=1}^\infty \frac{(-1)^n n}{p_n},
\]
where \( p_n \) denotes the \( n \)-th prime number. This series exhibits an alternating sign, and it is conditioned by the prime distribution, which makes it more complicated to handle directly in a summatory form.

Next, we aim to represent this alternating series as a Riemann-Stieltjes integral. The goal is to transform the summatory expression into an integral that retains the key properties of the series but is more amenable to mathematical analysis.

To facilitate the analysis, we approximate the alternating sequence \( (-1)^n \) with a smooth, oscillatory function. A suitable candidate is:
\[
g(x) = e^{i \pi x} e^{-\lambda x},
\]
where \( \lambda > 0 \) is a damping factor that ensures the function decays for large \( x \), and the exponential term \( e^{i \pi x} \) mimics the oscillatory behavior of \( (-1)^x \). This smooth function \( g(x) \) is particularly useful because it is continuous, decays for large \( x \), and oscillates like \( (-1)^x \).

We then apply the Abel summation formula \cite{niculescu2017noteabelspartialsummation}, which relates a series to an integral. The Abel summation formula for a series \( \sum_{n=1}^\infty a_n \) and a function \( f(x) \) is given by:
\[
\sum_{n=1}^\infty a_n = \int_1^\infty f(x) \, dA(x) - \lim_{x \to \infty} f(x) A(x),
\]
where \( A(x) \) is a suitable function associated with the sequence \( a_n \).

In our case, we apply Abel summation to the series \( \sum_{n=1}^\infty \frac{(-1)^n n}{p_n} \). The function \( f(x) \) we choose will be \( g(x) = e^{i \pi x} e^{-\lambda x} \), and the sum over primes \( \sum_{n=1}^\infty \frac{n}{p_n} \) will be replaced by an integral involving the Chebyshev function \( \psi(x) \), which is closely related to the distribution of primes. Thus, we represent the series as:
\[
S = \int_1^\infty g(x) \, d\psi(x).
\]

Finally, we justify the representation of \(S\) as a Riemann-Stieltjes integral, where \( g(x) = e^{i \pi x} e^{-\lambda x} \), which is a smooth Fourier-type approximation of \( (-1)^x \), and \( \psi(x) \) is the Chebyshev function, defined as:
\[
\psi(x) = \sum_{p \leq x} \log p.
\]

This integral represents the Erdős alternating series as an integral over a smooth function \( g(x) \) with respect to the Chebyshev function \( \psi(x) \), which encodes the distribution of primes. The function \( g(x) \) approximates the alternating sign in the series, while \( \psi(x) \) accumulates the logarithms of primes, linking the integral to the prime number sequence.

Using this Riemann-Stieltjes integral representation, we can further analyze the error terms and the convergence of the series. The integral in Stieltjes sense allows for techniques such as integration by parts and the use of asymptotic expansions to estimate the behavior of the series. In particular, we can focus on bounding the error terms that arise when approximating the series by the integral, ensuring the convergence of the resulting integral under appropriate conditions.

\section{Main results}

\subsection{Hölder Continuity of \texorpdfstring{\texorpdfstring{$g(x)$}{g(x)}}{g(x)} and \texorpdfstring{$\psi(x)$}{psi(x)}} 
We apply Hölder continuity and Young's criterion \cite{Young1936} to establish the convergence conditions, followed by an explicit asymptotic evaluation. First, we restate a fundamental theorem from the literature that guarantees the existence of the Riemann-Stieltjes integral under certain conditions.

\begin{theorem}[Existence of the Riemann-Stieltjes Integral]
Let \( f \in W_p \) and \( g \in W_q \), where \( p, q > 0 \) and \( \frac{1}{p} + \frac{1}{q} > 1 \). If \( f \) and \( g \) have no common discontinuities, then the Riemann-Stieltjes integral exists in the Riemann sense.
\end{theorem}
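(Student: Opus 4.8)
The plan is to interpret $W_p$ as the Wiener class of functions of bounded $p$-variation on the relevant interval, equipped with the seminorm $V_p(f) = \sup_{\Pi}\bigl(\sum_i |f(t_i) - f(t_{i-1})|^p\bigr)^{1/p}$, and likewise $V_q(g)$; the statement is then the classical existence theorem of L.\,C.\,Young, so I would reconstruct his argument. The central object is the left-endpoint Riemann–Stieltjes sum $S_\Pi = \sum_i f(t_{i-1})\bigl(g(t_i) - g(t_{i-1})\bigr)$ over a partition $\Pi = (a = t_0 < \cdots < t_N = b)$, and the goal is to show that these sums converge to a common limit, independent of the tags, as the partitions refine.

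First I would establish the key \emph{point-dropping lemma}. Writing $\Delta f_j = f(t_j) - f(t_{j-1})$ and $\Delta g_j = g(t_j) - g(t_{j-1})$, removing an interior node $t_j$ changes $S_\Pi$ by exactly $\Delta f_j\,\Delta g_{j+1}$. Setting $1/r = 1/p + 1/q$, the hypothesis $\tfrac1p+\tfrac1q>1$ forces $r<1$, and applying Hölder with the conjugate exponents $p/r$ and $q/r$ over the $N-1$ interior indices yields
\[
\sum_{j=1}^{N-1}\bigl(|\Delta f_j|\,|\Delta g_{j+1}|\bigr)^r \le V_p(f)^r\,V_q(g)^r,
\]
so some interior node obeys $|\Delta f_j|\,|\Delta g_{j+1}| \le V_p(f)\,V_q(g)\,(N-1)^{-(1/p+1/q)}$. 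Iterating this removal down to the trivial partition $\{a,b\}$, and using that deleting a node never increases $V_p$ or $V_q$ (superadditivity of the $p$-th power of the variation), the telescoped error is bounded by the convergent series
\[
\bigl|S_\Pi - f(a)(g(b)-g(a))\bigr| \le V_p(f)\,V_q(g)\sum_{m=1}^{\infty} m^{-(1/p+1/q)} = \zeta\!\left(\tfrac1p+\tfrac1q\right) V_p(f)\,V_q(g).
\]
This is the Young–Loève inequality, and it is precisely here that the hypothesis is used: $\tfrac1p+\tfrac1q>1$ is exactly the condition making $\zeta(\tfrac1p+\tfrac1q)$ finite.

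To pass from this a priori bound to genuine existence, I would apply the inequality on each subinterval of a partition $\Pi$ against any refinement $\Pi'$, obtaining
\[
|S_\Pi - S_{\Pi'}| \le C_{p,q}\sum_{k} V_p(f;I_k)\,V_q(g;I_k),
\]
where the $I_k$ are the subintervals of $\Pi$ and $C_{p,q} = \zeta(\tfrac1p+\tfrac1q)$. The argument then reduces to showing the right-hand side tends to $0$ as the mesh shrinks, and here the hypothesis that $f$ and $g$ have no common discontinuity enters decisively: it guarantees that on each subinterval the product of local variations becomes uniformly small, so that by superadditivity the sum collapses, making $\{S_\Pi\}$ a Cauchy net and forcing convergence to a tag-independent limit.

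The main obstacle I anticipate is this last step rather than the algebraic estimate: the point-dropping lemma and the $\zeta$-bound are essentially bookkeeping once Hölder is in place, but converting the refinement bound into actual convergence requires genuine care with the control functional $V_p(f;\cdot)\,V_q(g;\cdot)$ and a real use of the no-common-discontinuity hypothesis — which is also what excludes the classical counterexample where a shared jump destroys existence. I would handle it by isolating the finitely many large jumps of $f$ and of $g$, noting that they never coincide, and showing that the variation contributions away from them vanish with the mesh.
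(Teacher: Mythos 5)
Your proposal is correct in its essentials, and it takes a genuinely different --- and substantially stronger --- route than the paper. The paper's proof consists of writing down a single Riemann--Stieltjes sum \(S=\sum_i f(\xi_i)\,[g(x_i)-g(x_{i-1})]\), applying discrete H\"older to it, and concluding \(|S|\le V_p(f)\,V_q(g)\); existence is then asserted in one sentence (``as the partition is refined, \(S\) approaches the Stieltjes integral''). That is not a proof of existence: a uniform bound on the sums says nothing about their convergence, the hypotheses \(1/p+1/q>1\) and ``no common discontinuities'' are never actually used, and the key displayed inequality is false as written, since \(\bigl(\sum_i|f(\xi_i)|^p\bigr)^{1/p}\) involves the \emph{values} of \(f\) at the tags rather than increments and is not majorized by \(V_p(f)\) (take \(f\equiv 1\): the left side is \(n^{1/p}\), the right side \(0\)). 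Your plan --- the point-dropping lemma, H\"older with the conjugate pair \(p/r,\ q/r\), iteration down to the trivial partition to obtain the Young--Lo\`eve bound \(\zeta(1/p+1/q)\,V_p(f)\,V_q(g)\), and localization of that bound to the subintervals of \(\Pi\) to compare against refinements --- is exactly Young's classical argument, i.e.\ what a genuine proof of this theorem requires; the paper instead leans on Young's result by citation.

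Two caveats on the step you yourself flag as the hard one. First, your point-dropping identity is stated for left-endpoint sums; to get a tag-independent limit you must also compare a general tagged sum on \(\Pi\) with the left-endpoint sum on \(\Pi\cup\{\xi_i\}\), whose difference equals \(\sum_i \bigl(f(\xi_i)-f(t_{i-1})\bigr)\bigl(g(\xi_i)-g(t_{i-1})\bigr)\) and is controlled by the same local estimates. Second, the assertion that ``the product of local variations becomes uniformly small'' is not automatic: \(V_q(g;I_k)\) does \emph{not} tend to \(0\) with the mesh on intervals meeting jumps of \(g\), and a function in \(W_q\) may have a dense set of small jumps, so isolating only the finitely many \emph{large} jumps is not by itself enough. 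The standard repairs are either (a) superadditivity: at most \(V_q(g)^q/\epsilon^q\) of the \(I_k\) can satisfy \(V_q(g;I_k)\ge\epsilon\), and as the mesh shrinks such intervals concentrate at discontinuity points of \(g\), where \(f\) is continuous and hence has vanishing local \(p\)-variation (this last fact itself needs a short superadditivity-plus-oscillation argument); or (b) bumping exponents to \(p'>p\), \(q'>q\) still satisfying \(1/p'+1/q'>1\) and interpolating local variation against oscillation via \(V_{p'}(f;I)^{p'}\le \mathrm{osc}(f;I)^{p'-p}\,V_p(f;I)^p\). With either device your outline closes into a complete proof, which is considerably more than the paper's own argument delivers.
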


\begin{proof}
The proof follows from the \(\alpha\)-Hölder and \(\beta\)-Hölder conditions \cite{Young1936}, together with the Hölder inequality. Recall the Hölder inequality:

\[
\left( \int_a^b |f(x)g(x)| \, dx \right) \leq \left( \int_a^b |f(x)|^p \, dx \right)^{\frac{1}{p}} \left( \int_a^b |g(x)|^q \, dx \right)^{\frac{1}{q}}.
\]

Given that \( f \in W_p \) and \( g \in W_q \), the bounded variation conditions are satisfied. Specifically, for a partition \( P = \{x_0, x_1, \dots, x_n\} \) of \( [a, b] \), we have

\[
V_p(f; [a, b]) = \sup_P \left( \sum_{i=1}^n |f(x_i) - f(x_{i-1})|^p \right)^{\frac{1}{p}} < \infty
\]

and similarly,

\[
V_q(g; [a, b]) = \sup_P \left( \sum_{i=1}^n |g(x_i) - g(x_{i-1})|^q \right)^{\frac{1}{q}} < \infty.
\]

To establish the existence of the Stieltjes integral \( \int_a^b f \, dg \), we consider the Riemann-Stieltjes sum:

\[
S = \sum_{i=1}^n f(\xi_i) [g(x_i) - g(x_{i-1})],
\]

where \( \xi_i \in [x_{i-1}, x_i] \). The absolute value of this sum can be bounded as follows using Hölder’s inequality:

\[
|S| \leq \sum_{i=1}^n |f(\xi_i)||g(x_i) - g(x_{i-1})| \leq \left( \sum_{i=1}^n |f(\xi_i)|^p \right)^{\frac{1}{p}} \left( \sum_{i=1}^n |g(x_i) - g(x_{i-1})|^q \right)^{\frac{1}{q}}.
\]

From the bounded variation properties of \( f \) and \( g \), we conclude that

\[
|S| \leq V_p(f; [a, b]) V_q(g; [a, b]).
\]
\end{proof}
As the partition \( P \) is refined, \( S \) approaches the Stieltjes integral. Since the variations are finite, the Riemann-Stieltjes integral exists and is finite. Thus, Hölder conditions, which ensure bounded variation, guarantee the well-definedness of the integral. A classical result \cite{Young1936} ensures that if \( f \) is \(\alpha\)-Hölder continuous and \( g \) is \(\beta\)-Hölder continuous with \( \alpha + \beta > 1 \), the integral is well-defined \cite{RiemannStieltjesIntegral}.
  
\subsection{Regularity of the Chebyshev function}

The Chebyshev function satisfies the asymptotic estimate \cite{alamadhi2011chebyshevsbiasgeneralizedriemann}:
\[
\psi(x) = x + O(x^\theta), \quad \text{with } \theta = \frac{1}{2} \text{ assuming the Riemann Hypothesis (RH)}
\]

This implies that \( \texorpdfstring{\psi(x)}{psi} \) satisfies a Hölder condition:

\[
|\psi(x) - \psi(y)| \leq C |x - y|^{1/2}, \quad \forall x, y \geq 1,
\]

so \( \psi(x) \) is \(\beta = 1/2\)-Hölder continuous.

\subsection{\texorpdfstring{Regularity of \( g(x) \)}{Regularity of g(x)}}

For \( g(x) = e^{i \pi x} e^{-\lambda x} \), we compute the derivative:

\[
g'(x) = (\pi i - \lambda) e^{i \pi x} e^{-\lambda x}.
\]

Since \( g(x) \) is differentiable with a bounded derivative, it satisfies the Hölder condition \( \alpha = 1 \), i.e., \( g(x) \) is \( 1 \)-Hölder continuous.

\subsection{Young's Criterion and Convergence}

Young's Hölder integral criterion states that the integral

\[
I = \int_1^\infty g(x) \, d\psi(x)
\]

converges if

\[
\alpha + \beta > 1.
\]

Given that \( \alpha = 1 \) and \( \beta = 1/2 \), we obtain:

\[
1 + \frac{1}{2} = \frac{3}{2} > 1,
\]

so \( I \) converges absolutely under the assumption of the Riemann Hypothesis \cite{alamadhi2011chebyshevsbiasgeneralizedriemann}.

\subsection{Asymptotic Evaluation}

We split the integral as follows:

\[
I = \int_1^\infty e^{i \pi x} e^{-\lambda x} d\psi(x) = I_1 + I_2,
\]

where:

\[
I_1 = \int_1^\infty e^{i \pi x} e^{-\lambda x} dx, \quad I_2 = \int_1^\infty e^{i \pi x} e^{-\lambda x} dR(x).
\]
Where \( R(x) = O(x^\theta) \), as analyzed later.

\subsection{Main Term Evaluation of Integral}

\[
I_1 = \int_1^\infty e^{(i \pi - \lambda) x} \, dx = \frac{e^{(i \pi - \lambda) \cdot 1}}{-(i \pi - \lambda)}.
\]

Using \( e^{i \pi} = -1 \), we get:

\[
I_1 = \frac{-e^{-\lambda}}{\lambda - i \pi}.
\]

For small \( \lambda \), we obtain the asymptotic form:

\[
I_1 \sim \frac{-1}{\lambda - i \pi}.
\]

\subsection{Error Term I2}

We estimate the error term I2. Since \( R(x) = O(x^\theta) \), we have:

\[
|I_2| \leq C \int_1^\infty x^\theta e^{-\lambda x} \, dx.
\]

Using the Laplace transform approximation:

\[
\int_1^\infty x^\theta e^{-\lambda x} dx \sim \frac{\Gamma(\theta + 1)}{\lambda^{\theta + 1}},
\]

we conclude that:

\[
I_2 = O\left( \frac{1}{\lambda^{3/2}} \right).
\]

\subsection{Final Asymptotics}

Thus, the leading behavior of \( I \) is:

\[
I \sim \frac{-1}{\lambda - i \pi} + O\left( \frac{1}{\lambda^{3/2}} \right).
\]

For small \( \lambda \), we obtain:

\[
I \sim \frac{-1}{\lambda - i \pi}, \quad \text{as } \lambda \to 0^+.
\]

The integral's leading behavior is dominated by \( \frac{-1}{\lambda - i \pi} \), with the error term governed by the prime gap distribution.
This result rigorously establishes the connection between the oscillatory sum and a Laplace-type integral. Furthermore, as noted above, the series appears to converge (albeit slowly) to approximately $-0.052161$. This is why the structure of $\frac{-1}{\lambda - i \pi}$ arises in the analysis.

\subsection{Functional Regularity and Embedding}

We place \( g \in C^1([1, \infty)) \) with exponential decay, ensuring \( g \in W^{1,p}([1,\infty)) \cap L^\infty \) for any \( p \geq 1 \). The Chebyshev function \( \psi \), under the Riemann Hypothesis (RH) \cite{alamadhi2011chebyshevsbiasgeneralizedriemann}, satisfies
\[
|\psi(x) - \psi(y)| \leq C |x - y|^{1/2},
\]
so that \( \psi \in C^{0,1/2}([1, \infty)) \subset W^{s,2} \) for \( s < 1/2 \), a consequence of the \textit{fractional Sobolev embedding theorem} \cite{rybalko2023holdercontinuityfunctionsfractional}. The fractional Sobolev space \( W^{s,2} \) consists of functions whose derivatives (in a weak sense) are \( L^2 \)-integrable with a fractional decay rate. The embedding theorem states that functions in \( W^{s,p}(\Omega) \) for certain values of \( s \) and \( p \) can be embedded into other function spaces. Specifically, for \( s < 1/2 \), the space \( W^{s,2}([1, \infty)) \) is embedded into a continuous function space, which implies that \( \psi(x) \) exhibits Hölder regularity of order \( 1/2 \) and therefore \( \psi \) can be treated as a function with fractional smoothness.

This ensures that \( \psi(x) \) behaves in a manner compatible with the conditions needed for analysis using the fractional Sobolev framework \cite{bahrouni2019embeddingtheoremsfractionalorliczsobolev}, making it suitable for applying integral methods in the Riemann-Stieltjes context.

By Young's Theorem for Hölder paths \cite{Young1936}, the Riemann-Stieltjes integral \( \int g \, d\psi \) exists provided
\begin{equation}
\alpha + \beta > 1,
\end{equation}
where \( g \in C^{0,\alpha} \) and \( \psi \in C^{0,\beta} \). In our case, \( \alpha = 1 \), \( \beta = 1/2 \), and so the condition is satisfied.

\subsection{Integration by Parts in the Riemann-Stieltjes Sense}

Using integration by parts for Stieltjes integrals (see \cite{RiemannStieltjesIntegral}), we decompose:
\begin{equation}
I = \left[ g(x) \psi(x) \right]_1^\infty - \int_1^\infty \psi(x) g'(x) \, dx.
\end{equation}
The boundary term vanishes at infinity due to the exponential decay of \( g(x) \). Thus, we write:
\begin{equation}
I = - \int_1^\infty \psi(x) g'(x) \, dx = I_1 + I_2,
\end{equation}
where:
\begin{align}
I_1 &= \int_1^\infty x g'(x) \, dx, \\
I_2 &= \int_1^\infty R(x) g'(x) \, dx, \quad \text{with } R(x) := \psi(x) - x = O(x^{1/2}).
\end{align}

\subsection{\texorpdfstring{Evaluation of the Main Term \( I_1 \)}{Evaluation of the Main Term I1}}

We compute:
\begin{align}
g'(x) &= (\pi i - \lambda) e^{i\pi x} e^{-\lambda x}, \\
I_1 &= (\pi i - \lambda) \int_1^\infty x e^{(i\pi - \lambda)x} \, dx.
\end{align}
This Laplace-type integral converges for \( \lambda > 0 \), and yields:
\begin{equation}
I_1 = \frac{e^{(i\pi - \lambda)}}{(\lambda - i\pi)^2}.
\end{equation}

\subsection{\texorpdfstring{Bounding the Error Term \( I_2 \)}{Bounding the Error Term I2}}

Using the known estimate \( R(x) = O(x^{1/2}) \), we bound:
\begin{align}
|I_2| &\leq C \int_1^\infty x^{1/2} |g'(x)| \, dx \\
&= C |\pi i - \lambda| \int_1^\infty x^{1/2} e^{-\lambda x} \, dx \\
&= O\left( \frac{1}{\lambda^{3/2}} \right),
\end{align}
by Laplace transform asymptotics.

Combining the two components, we obtain:
\begin{equation}
I = \frac{e^{(i\pi - \lambda)}}{(\lambda - i\pi)^2} + O\left( \frac{1}{\lambda^{3/2}} \right).
\end{equation}

We can emphasize once more that we have divided the integral into its main and error terms. Thus, we apply integration by parts in Stieltjes form:
\[
I = \left[ g(x) \psi(x) \right]_1^\infty - \int_1^\infty \psi(x) g'(x) \, dx.
\]

The boundary term at infinity vanishes, so we split the integral into two terms:

\[
I = I_1 + I_2,
\]

where
\[
I_1 = \int_1^\infty x g'(x) \, dx, \quad I_2 = \int_1^\infty O(x^{1/2}) g'(x) \, dx.
\]

\subsection*{\texorpdfstring{Step: Evaluating \( I_1 \)}{Step: Evaluating I1}}

For the function \( g(x) = (-1)^x e^{-\lambda x} \), we have:

\[
g'(x) = (-1)^x \left( -\lambda x^{-\lambda - 1} \right).
\]

Substituting this into the integral for \( I_1 \):

\[
I_1 = -\lambda \int_1^\infty (-1)^x x^{-\lambda}.
\]

This integral behaves like an alternating Dirichlet-type integral and converges absolutely for \( \lambda > 0 \).

\subsection*{\texorpdfstring{Step: Bounding the Error Term \( I_2 \)}{Step: Bounding the Error Term I2}}

Using the estimate \( R(x) = O(x^{1/2}) \), we bound the error term \( I_2 \):

\[
|I_2| \leq C \int_1^\infty x^{1/2} \lambda x^{-\lambda - 1} \, dx.
\]

For convergence of this integral, we require:

\[
\frac{1}{2} - (\lambda + 1) < -1 \quad \Rightarrow \quad \lambda > \frac{3}{2}.
\]

Thus, for \( \lambda > \frac{3}{2} \), the error term vanishes asymptotically, and the integral is well-defined.

\section*{Conclusions}

In this work, we have confirmed, using an alternative approach, the potential convergence of the alternating Erdős series assuming a suitably strong version of the Hardy–Littlewood prime tuples conjecture, as previously examined by Terence Tao in his paper. Tao addressed the convergence of this sum, but, as he points out, it remains a challenging open problem, even under the assumption of Conjecture 1.3. A significant portion of the sum arises from small prime gaps, where \( p_{n+1} - p_n \) is much smaller than \( \log n \), but these gaps occur sparsely. As such, the methods employed in Tao’s paper do not seem to effectively induce cancellation from the \( (-1)^n \) terms on this sparse set. Nevertheless, standard probabilistic heuristics, such as replacing \( (-1)^n \) by independent random signs and applying Khintchine’s inequality \cite{luo2020khintchineinequalitynormedspaces}, strongly suggest that this series should converge. This heuristic approach even proposes the stronger conjecture that the series

\[
\sum_{n=1}^{\infty} \frac{(-1)^n}{n^\theta \cdot (p_{n+1} - p_n)}
\]

converges for \( \theta > \frac{1}{2} \) and diverges for \( \theta \leq \frac{1}{2} \).

In our analysis, we have assumed the necessity of the Riemann Hypothesis for approaching this problem through the Riemann-Stieltjes integral of the form:

\[
I = \int_1^\infty g(x) \, d\psi(x),
\]

where \( \psi(x) \) represents the Chebyshev function or a related prime-counting function. We have rigorously examined this integral by splitting it into main and error terms, utilizing integration by parts in the Stieltjes sense. Our findings demonstrate that \( I \) converges absolutely for \( \lambda > \frac{3}{2} \), supported by sharp bounds for the error terms. Furthermore, our results are reinforced by fractional Sobolev embeddings and justify the application of Young’s inequality under generalized Hölder conditions.

If the Riemann Hypothesis were true, the Chebyshev function would be tied to the asymptotic analysis presented in this work, completing the integrals addressed. The key aspect of our approach is the Hölder continuity of the involved functions, a novel technique not previously explored for transforming special series, such as the Erdős series. This method plays a crucial role in series that can be analyzed through Riemann-Stieltjes integral representations and Abel summation formula. It offers a fresh perspective, enabling a wide range of series to be studied within an asymptotic integral framework, thus providing a versatile tool for further exploration in the field. Finally, we introduce a novel tool that complements and extends Tao’s work, offering a fresh perspective on the problem.

\bibliographystyle{amsplain}

\end{document}